\newtheorem{thm}{Theorem}[section]
\newtheorem{cor}[thm]{Corollary}
\newtheorem{lem}[thm]{Lemma}
\theoremstyle{definition}
\newtheorem{define}[thm]{Definition}
\theoremstyle{remark}
\newtheorem{rem}[thm]{Remark}
\newcommand{\ve}[1]{\boldsymbol{\mathbf{#1}}}
\newcommand{\R}{\mathbb{R}}
\newcommand{\Z}{\mathbb{Z}}
\renewcommand{\d}{\partial}
\renewcommand{\subset}{\subseteq}
\renewcommand{\bar}{\overline}
\newcommand{\iso}{\cong}
\DeclareMathOperator{\gr}{{gr}}
\DeclareMathOperator{\id}{{id}}
\DeclareMathOperator{\Spin}{{Spin}}
\DeclareMathOperator{\Sym}{{Sym}}
\newcommand{\bF}{\mathbb{F}}
\newcommand{\bK}{\mathbb{K}}
\newcommand{\bL}{\mathbb{L}}
\newcommand{\bO}{\mathbb{O}}
\newcommand{\bT}{\mathbb{T}}
\newcommand{\cA}{\mathcal{A}}
\newcommand{\cC}{\mathcal{C}}
\newcommand{\cD}{\mathcal{D}}
\newcommand{\cF}{\mathcal{F}}
\newcommand{\cM}{\mathcal{M}}
\newcommand{\frs}{\mathfrak{s}}
\newcommand{\frt}{\mathfrak{t}}
\newcommand{\cCFK}{\mathcal{C\!F\!K}}
\newcommand{\CFK}{\mathit{CFK}}
\newcommand{\HFK}{\mathit{HFK}}
\newcommand{\HFKh}{\widehat{\HFK}}
\newcommand{\xs}{\ve{x}}
\newcommand{\ys}{\ve{y}}
\newcommand{\zs}{\ve{z}}
\newcommand{\ws}{\ve{w}}
\newcommand{\as}{\ve{\alpha}}
\newcommand{\bs}{\ve{\beta}}
\renewcommand{\a}{\alpha}
\renewcommand{\b}{\beta}
\title[Knot Floer and strongly homotopy-ribbon concordances]{Knot Floer homology and strongly homotopy-ribbon concordances}
\author{Maggie Miller}
\address{Department of Mathematics\\Princeton University\\  Princeton, NJ 08544, USA}
\email{maggiem@math.princeton.edu}
\author{Ian Zemke}
\address{Department of Mathematics\\Princeton University\\  Princeton, NJ 08544, USA}
\email{izemke@math.princeton.edu}
\thanks{MM was supported by NSF grant DGE-1656466. IZ was supported by NSF grant DMS-1703685. }
\begin{document}
\maketitle
	\begin{abstract}
		We prove that the map on knot Floer homology induced by a strongly homotopy-ribbon concordance is injective. One application is that the Seifert genus is monotonic under strongly homotopy-ribbon concordance.
		\end{abstract}
\section{Introduction}

A \emph{concordance} $C$ from $K_0$ to $K_1$ is a smoothly embedded annulus in $[0,1]\times S^3$ such that $\d C=-K_0\cup K_1$. A \emph{ribbon concordance} from $K_0$ to $K_1$ is a concordance $C$ such that the projection of $C$ to $[0,1]$ is Morse and has only index 0 and 1 critical points.  

If $C$ is a concordance from $K_0$ to $K_1$, write $\pi_1(C)$, $\pi_1(K_0)$ and $\pi_1(K_1)$ for the fundamental groups of the complements. A \emph{homotopy-ribbon concordance} is a concordance such that 
\[
\pi_1(K_0)\to \pi_1(C)\qquad \text{is an injection,}\qquad \text{and} \qquad \pi_1(K_1)\to \pi_1(C)\qquad \text{is a surjection.}
\]
The definition is justified by a result of Gordon~\cite{Gordon}*{Lemma~3.1} which implies that ribbon concordances are homotopy-ribbon.

In fact, Gordon showed something stronger: if $C$ is a ribbon concordance, then its complement can be built by attaching only 1-handles and 2-handles to $S^3\setminus K_0$.


Justified by Gordon's observation, one can make the following definition:

\begin{define}
A \emph{strongly homotopy-ribbon concordance} $C$ from $K_0$ to $K_1$ is a concordance such that the complement of $C$ can be constructed by attaching 4-dimensional 1-handles and 2-handles to $S^3\setminus K_0$.
\end{define}

Strongly homotopy-ribbon concordance has been studied by, e.g., Cochran~\cite{CochranRibbon} and Larson-Meier~\cite{LarsonFiberedRibbon}.

By definition, we have
\[
\{\text{ribbon concordances}\} \subset \{\text{strongly homotopy-ribbon concordances}\}
\]
\[
\subset \{\text{homotopy-ribbon concordances}\}\subsetneq \{\text{concordances}\}.
\]

It is unknown whether the first two inclusions are strict.

In this paper, we show that knot Floer homology obstructs strongly homotopy-ribbon concordance. Our argument is based on a result of the second author~\cite{ZemRibbon} for ribbon concordance.

\subsection{Knot Floer homology and strongly homotopy-ribbon concordances}

Knot Floer homology is an invariant of knots in 3-manifolds discovered independently by Ozsv\'{a}th and Szab\'{o}~\cite{OSKnots} and Rasmussen~\cite{RasmussenKnots}. 


If $K$ is a knot in $S^3$, the simplest version of knot Floer homology is the hat version, which is a bigraded $\bF_2$ vector space:
\[
\HFKh(K)=\bigoplus_{i,j\in \Z} \HFKh_i(K,j).
\]
The index $i$ denotes the Maslov grading, and $j$ the Alexander grading.

Associated to a concordance $C$ from $K_0$ to $K_1$, Juh\'{a}sz and Marengon~\cite{JMConcordance} described a bigraded homomorphism
\[
\widehat{F}_C\colon \HFKh(K_0)\to \HFKh(K_1),
\]
which is well defined up to bigraded automorphisms of $\HFKh(K_0)$ and $\HFKh(K_1)$. The ambiguity can be eliminated by picking a decoration of the concordance $C$ consisting of a pair of disjoint arcs on $C$ running from $K_0$ to $K_1$. Their construction used a more general construction of link cobordism maps described by Juh\'{a}sz ~\cite{JCob} for link Floer homology.

There is a more general version of knot Floer homology, the \empty{infinity version}, denoted
\[
\CFK^\infty(K),
\]
which is a $\Z\oplus \Z$-filtered, bigraded chain complex over the ring $\bF_2[U,U^{-1}]$. We refer the reader to \cite{Man-Intro-HFK} for a nice introduction to the numerous versions of knot Floer homology.

The second author~\cite{ZemCFLTQFT} constructed a bigraded, filtered, $\bF_2[U,U^{-1}]$-equivariant cobordism map
\[
F_{C}\colon \CFK^\infty(K_0)\to \CFK^\infty(K_1).
\]

The second author showed that the cobordism map induced by a ribbon concordance is left invertible~\cite{ZemRibbon}*{Theorem~1.1}.  Our main result is that the same is true for strongly homotopy-ribbon concordances:

\begin{thm}\label{thm:1} If $C$ is a strongly homotopy-ribbon concordance from $K_0$ to $K_1$, then the induced map
\[
F_{C}\colon \CFK^\infty(K_0)\to \CFK^\infty(K_1)
\]
admits a left inverse. In particular, the induced map
\[
\widehat{F}_{C}\colon \HFKh(K_0)\to \HFKh(K_1)
\]
is an injection.
\end{thm}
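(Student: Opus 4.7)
The strategy is to mirror Zemke's proof for ribbon concordances \cite{ZemRibbon} by exploiting the handle decomposition of the exterior that the strongly homotopy-ribbon hypothesis provides. Write $X_C$ for the exterior of $C$ in $[0,1]\times S^3$ and $X_{K_i}$ for the exterior of $K_i$ in $S^3$. By assumption, $X_C$ is obtained from $X_{K_0}\times[0,1]$ by attaching $n$ four-dimensional 1-handles and $m$ 2-handles. An Euler characteristic count using $\chi(X_C)=0=\chi(X_{K_0}\times[0,1])$ forces $n=m$, so one should keep track of $n$ pairs of cancelable handles in what follows.

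The first step is to express $F_C$ through this handle decomposition. Using Zemke's link-cobordism TQFT \cite{ZemCFLTQFT}, a choice of decoration on $C$ together with the handle decomposition of $X_C$ factors $F_C$ as a composition of elementary 1-handle maps and 2-handle maps on $\CFK^\infty$. Reading the same handle decomposition from the top gives a handle decomposition of the exterior of the time-reversed concordance $\bar C$ from $K_1$ to $K_0$ with $n$ 2-handles followed by $n$ 3-handles. Applying the TQFT with a compatible decoration yields a candidate left inverse $G\colon \CFK^\infty(K_1)\to \CFK^\infty(K_0)$.

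The central step is to prove $G\circ F_C \simeq \mathrm{id}$. Stacking $X_C$ on top of $X_{\bar C}$ yields a handle decomposition of $X_C\cup X_{\bar C}$ on $X_{K_0}\times[0,1]$ with $n$ 1-handles, $2n$ 2-handles, and $n$ 3-handles. Geometrically, the 3-handles cancel the original 1-handles in dual pairs, and after appropriate handle slides the remaining $2n$ 2-handles can be arranged to occur in canceling pairs along dual framing curves. If each geometric cancellation is realized by a TQFT-level chain homotopy to the identity (the 1/3-handle cancellation and the 2/2-handle cancellation, respectively), then composing these equivalences through the handle decomposition produces $G\circ F_C \simeq \mathrm{id}_{\CFK^\infty(K_0)}$. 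Passing to the hat flavor then gives injectivity of $\widehat F_C$.

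The main obstacle is establishing the two TQFT-level cancellation lemmas in the relevant link-exterior setting. The elementary handle maps depend on auxiliary data — framings for 2-handles, attaching data for 1- and 3-handles, and the arcs needed to pin down the cobordism map — so one must coordinate these choices carefully so that the geometric cancellations descend to genuine chain homotopies. In addition, one has to verify that the handle slides needed to pair the 2-handles can be performed without disturbing the decoration, and that the whole argument respects the $\bF_2[U,U^{-1}]$-equivariant and bifiltered structure on $\CFK^\infty$. This bookkeeping, rather than any single clever idea, is where I expect the bulk of the technical work to lie.
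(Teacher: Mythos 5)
Your skeleton is right (factor $F_C$ and a candidate left inverse through a handle decomposition of the doubled exterior and show the composite is the identity), but the central mechanism you invoke does not work, and it is not what the paper does. You propose to establish $G\circ F_C\simeq \mathrm{id}$ via ``TQFT-level'' 1/3- and 2/2-handle cancellations. The handle decomposition of $X_{\bar C\circ C}$ does contain geometrically canceling handle pairs in the ambient $[0,1]\times S^3$: the $0$-framed meridians $k_i'$ pair with the $k_i$, and the $3$-handles pair with the $1$-handles. However, these cancellations cannot be carried out \emph{in the complement of the concordance}: $K_0$ (and hence $\bar C\circ C$) is in general linked with the $2$-handle attaching circles $k_i$, so the isotopies and slides needed to cancel the handles pass through the concordance and do not descend to $X_{\bar C\circ C}$. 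Even if you cancel all the handles in $[0,1]\times S^3$, you are left with the surface $\bar C\circ C$ sitting inside $[0,1]\times S^3$, which is not a priori isotopic to $[0,1]\times K_0$; so handle cancellation of the ambient $4$-manifold alone gives you nothing. A link-cobordism map depends on the embedded decorated surface and not just on the ambient $4$-manifold, so the step ``geometric cancellation $\Rightarrow$ chain homotopy to the identity'' is where the argument breaks.

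The missing ingredient --- the actual heart of the paper's proof, as it was in \cite{ZemRibbon} --- is the sphere-tubing lemma: tubing a decorated concordance in $[0,1]\times S^3$ to a disjoint embedded $2$-sphere, away from the decorations, does not change the induced map on $\CFK^\infty$. The paper never cancels handles at the TQFT level. Instead it untangles $K_0$ from the attaching data: a crossing change of $K_0$ with $k_i$ is effected by ``sliding'' $K_0$ over the $0$-framed meridian $k_i'$, and that slide is exactly tubing the concordance $\bar C\circ C$ to the embedded $2$-sphere $S_i$ built from the cocore of the $k_i$ $2$-handle and the core of the $k_i'$ $2$-handle. The sphere-tubing lemma says these moves preserve the cobordism map, and after finitely many of them the concordance is disjoint from all attaching spheres, at which point it is visibly $[0,1]\times K_0$. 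Your proposal has no substitute for this step and therefore no mechanism to handle the linking of $K_0$ with the attaching circles; without it, the ``bookkeeping'' you flag as the main obstacle is not the obstacle at all.
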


Our proof uses a similar doubling trick as~\cite{ZemRibbon}: if $\bar{C}$ denotes the concordance from $K_1$ to $K_0$ obtained by turning around and reversing the orientation of $C$, then we will show that
\[
F_{\bar{C}}\circ F_{C}\simeq \id_{\CFK^\infty(K_0)}.
\]

\begin{rem}\label{remark:h}
In fact, the proof of Theorem~\ref{thm:1} works for strongly homotopy-ribbon concordances in an arbitrary $h$-cobordism of $S^3$.  See Remark \ref{rem:end}.  
This is potentially interesting because one may allow a homotopy-ribbon disk to live in any homotopy $B^4$. See, e.g., ~\cite{CassonLoop}.
\end{rem}

Ozsv\'{a}th and Szab\'{o}~\cite{OSgenusbounds} proved that knot Floer homology detects the Seifert genus:
\[
g_3(K)=\max\left\{j\colon \HFKh(K,j)\neq \{0\}\right\}.
\]

Combined with our Theorem~\ref{thm:1}, we obtain the following generalization of~\cite{ZemRibbon}*{Theorem~1.5}:

\begin{cor}\label{cor:1} If there is a strongly homotopy-ribbon concordance from $K_0$ to $K_1$, then
\[
g_3(K_0)\le g_3(K_1).
\]
\end{cor}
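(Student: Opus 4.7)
The plan is to deduce Corollary~\ref{cor:1} directly from Theorem~\ref{thm:1} combined with the Ozsv\'{a}th--Szab\'{o} genus detection formula $g_3(K)=\max\{j:\HFKh(K,j)\neq 0\}$. The main point is that the concordance map $\widehat{F}_C$ is \emph{bigraded}, so in particular it preserves the Alexander grading, and injectivity forces the Alexander support of $\HFKh(K_0)$ to be contained in the Alexander support of $\HFKh(K_1)$.

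Concretely, I would proceed as follows. Let $C$ be a strongly homotopy-ribbon concordance from $K_0$ to $K_1$, and set $j_0=g_3(K_0)$. By the Ozsv\'{a}th--Szab\'{o} genus detection result recalled above, we may choose a nonzero class $x\in \HFKh_i(K_0,j_0)$ for some Maslov grading $i$. By Theorem~\ref{thm:1}, the induced map
\[
\widehat{F}_{C}\colon \HFKh(K_0)\to \HFKh(K_1)
\]
is injective, hence $\widehat{F}_C(x)\neq 0$. Because $\widehat{F}_C$ was constructed (after choosing a decoration on $C$) as a bigraded homomorphism, we have $\widehat{F}_C(x)\in \HFKh_i(K_1,j_0)$, so $\HFKh(K_1,j_0)\neq 0$. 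Applying the genus detection formula on the $K_1$ side then gives
\[
g_3(K_0)=j_0\le \max\{j:\HFKh(K_1,j)\neq 0\}=g_3(K_1),
\]
which is the desired inequality.

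The one subtlety to keep in mind is that, as noted in the paragraph introducing $\widehat{F}_C$, the map is only well defined up to bigraded automorphisms of $\HFKh(K_0)$ and $\HFKh(K_1)$ unless a decoration of $C$ is chosen. This ambiguity is harmless here: we only need \emph{some} representative of this equivalence class to be a bigraded injection, which is exactly what Theorem~\ref{thm:1} provides (the $\HFKh$ statement is obtained from the $\CFK^\infty$ statement by passing to a subquotient, so it preserves the bigrading). Thus there is no real obstacle beyond stringing together Theorem~\ref{thm:1} with the genus-detection result; the whole argument is essentially a grading-bookkeeping exercise, mirroring the deduction of~\cite{ZemRibbon}*{Theorem~1.5} from~\cite{ZemRibbon}*{Theorem~1.1}.
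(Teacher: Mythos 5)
Your proof is correct and spells out exactly the deduction the paper leaves implicit: injectivity of the bigraded map $\widehat{F}_C$ from Theorem~\ref{thm:1}, combined with the Ozsv\'{a}th--Szab\'{o} genus detection formula, forces the top Alexander grading of $\HFKh(K_0)$ to be supported in $\HFKh(K_1)$. The paper presents Corollary~\ref{cor:1} as an immediate consequence with no further argument given, and your grading bookkeeping is precisely what is needed.
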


By Remark \ref{remark:h}, Corollary \ref{cor:1} applies whenever there is a strongly homotopy-ribbon concordance from $K_0$ to $K_1$ in any $h$-cobordism of $S^3$.

\section{Strongly homotopy-ribbon concordances}
\label{sec:ribbon=>stronghomotopy}

In this section, we demonstrate that ribbon concordances are strongly homotopy-ribbon concordances. This is described in the proof of~\cite{Gordon}*{Lemma~3.1}, however we give a proof for the reader's convenience.

\begin{lem}
If $C$ is a ribbon concordance from $K_0$ to $K_1$, then $C$ is a strongly homotopy-ribbon concordance.
\end{lem}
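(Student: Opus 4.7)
The plan is to exploit the Morse-theoretic data of the ribbon concordance directly. Since the projection $\pi\colon [0,1]\times S^3 \to [0,1]$ restricted to $C$ is Morse with only index 0 and 1 critical points, I would order the critical values as $0<t_1<\cdots<t_n<1$. Between consecutive critical values, $\pi|_C$ has no critical points, so the complement of $C$ in the corresponding slab is diffeomorphic to a product cobordism $[a,b]\times(S^3\setminus N(L_a))$ on the link complement; such regions contribute no handles. It therefore suffices to analyze the change in the complement across each critical value and show that it corresponds to attaching a single 4-dimensional 1-handle at an index 0 critical point and a single 4-dimensional 2-handle at an index 1 critical point.

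At each birth (index 0) critical point, the new component of the link at time $t_i+\epsilon$ is a small unknot $U$ bounding a cap $D$ — an unknotted, properly embedded disk in a local 4-ball $B^4$ disjoint from the rest of $C$. Since $D$ is standard, $B^4\setminus N(D)\cong S^1\times B^3$, and I would verify that gluing this into the surrounding product cobordism along the solid torus $S^3\setminus N(U)$ in the boundary realizes the attachment of a 4-dimensional 1-handle to the link complement. Explicitly, one can write $S^1\times B^3$ as a 0-handle with a 1-handle attached and check that the 0-handle is absorbed into the product while the 1-handle is the newly added handle.

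At each saddle (index 1) critical point, the surface $C$ locally receives a 2-dimensional 1-handle (band), inducing a band move on the link. I would verify, via the local model $\{t=-x_1^2+x_2^2,\;x_3=0\}$ in the slab, that the complement of this band in the surrounding 4-ball contributes a 4-dimensional 2-handle. Concretely, the core of the attached 2-handle is a disk dual to the band: its boundary is a meridian-type circle in the link complement just below the critical value that bounds after the band move, and thickening this cocore disk exhibits the required 2-handle.

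Concatenating these local models in the order of the critical values yields a handle decomposition of $([0,1]\times S^3)\setminus N(C)$, viewed as a cobordism from $S^3\setminus N(K_0)$ to $S^3\setminus N(K_1)$, with only 4-dimensional 1-handles (one per birth) and 2-handles (one per saddle). I expect the main obstacle to be bookkeeping in the two local computations — one must track the attaching regions carefully so that the handles really are being attached to the current top boundary in the product cobordism rather than to some modified picture — but both local normal forms are explicit, so the verification ultimately reduces to well-known identifications of neighborhoods of disks and bands in $B^4$.
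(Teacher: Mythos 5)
Your proof is correct in outline, but it takes a genuinely different route from the paper's. The paper never touches the Morse-theoretic local models directly. Instead, it adjoins a canceling 4-dimensional 1-/2-handle pair (drawn as a dotted circle and a $0$-framed unknot $k$) to the slab, then performs movie moves: sliding the birthed unknot $U$ over the dotted circle and the band $B$ over $k$ until the band cancels the birth. The result is a movie in which $K_0$ is merely isotoped (via handleslides over the dotted circle and $k$) in the surgered $3$-manifold, so the complement of $C$ is visibly $S^3\setminus K_0$ with one 1-handle and one 2-handle attached. This is a Kirby-calculus argument: it trades the two local analyses for a single global manipulation, at the cost of needing the movie-move machinery and a picture to keep track of the slides. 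Your argument, by contrast, is the classical Morse-theoretic one that goes back to Gordon's Lemma 3.1 (which the paper cites and says it is giving an alternative to), and is also the style of Gompf--Stipsicz Example 4.6.3 that the paper invokes later for the doubled concordance. Your approach is more self-contained and scales more transparently to arbitrary numbers of critical points; the paper's is shorter given familiarity with Kirby moves on link movies.

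One place where your write-up is lighter than it should be is the saddle step. You describe the core of the 2-handle as ``a disk dual to the band'' and then call it a ``cocore disk''; the cocore of the band is an arc, not a disk, so some care is needed to say exactly which disk in $B^4\setminus N(\Sigma)$ is the core of the 2-handle and what framed circle in $\partial_-(B^4\setminus N(\Sigma))$ it bounds. (The correct attaching circle is a small loop encircling the band once, which in $\pi_1$ of the link complement below the critical level reads off a word of the form $m_1 m_2^{-1}$ in meridians of the two strands the band joins; this is null-homotopic in the complement after the saddle, which is what lets the local model be a single 2-handle and nothing else.) The birth step is fine as stated: the identification $B^4\setminus N(D)\cong S^1\times B^3$ is standard for the boundary-trivial disk, and your absorption argument for why gluing it in along a 3-ball in the top boundary is a 1-handle attachment is correct. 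So the proposal is sound, but the saddle local-model verification deserves to be spelled out rather than asserted, since it is the step where an imprecise picture could lead one astray (e.g., into thinking the saddle contributes a 1-handle and a 2-handle rather than a single 2-handle).
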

\begin{proof} Suppose $C$ is a ribbon concordance.  For notational simplicity, we restrict to the case when the concordance has a single 0-handle and 1-handle. The concordance $C$ has a movie consisting of a single birth followed by a saddle, which is shown in Figure~\ref{fig::6}. The birth adds an unknot $U$, and the saddle adds a band $B$.

\begin{figure}[ht!]
	\centering
	\scalebox{.8}{
\begingroup%
  \makeatletter%
  \providecommand\color[2][]{%
    \errmessage{(Inkscape) Color is used for the text in Inkscape, but the package 'color.sty' is not loaded}%
    \renewcommand\color[2][]{}%
  }%
  \providecommand\transparent[1]{%
    \errmessage{(Inkscape) Transparency is used (non-zero) for the text in Inkscape, but the package 'transparent.sty' is not loaded}%
    \renewcommand\transparent[1]{}%
  }%
  \providecommand\rotatebox[2]{#2}%
  \newcommand*\fsize{\dimexpr\f@size pt\relax}%
  \newcommand*\lineheight[1]{\fontsize{\fsize}{#1\fsize}\selectfont}%
  \ifx\svgwidth\undefined%
    \setlength{\unitlength}{431.1659958bp}%
    \ifx\svgscale\undefined%
      \relax%
    \else%
      \setlength{\unitlength}{\unitlength * \real{\svgscale}}%
    \fi%
  \else%
    \setlength{\unitlength}{\svgwidth}%
  \fi%
  \global\let\svgwidth\undefined%
  \global\let\svgscale\undefined%
  \makeatother%
  \begin{picture}(1,0.25534589)%
    \lineheight{1}%
    \setlength\tabcolsep{0pt}%
    \put(0,0){\includegraphics[width=\unitlength,page=1]{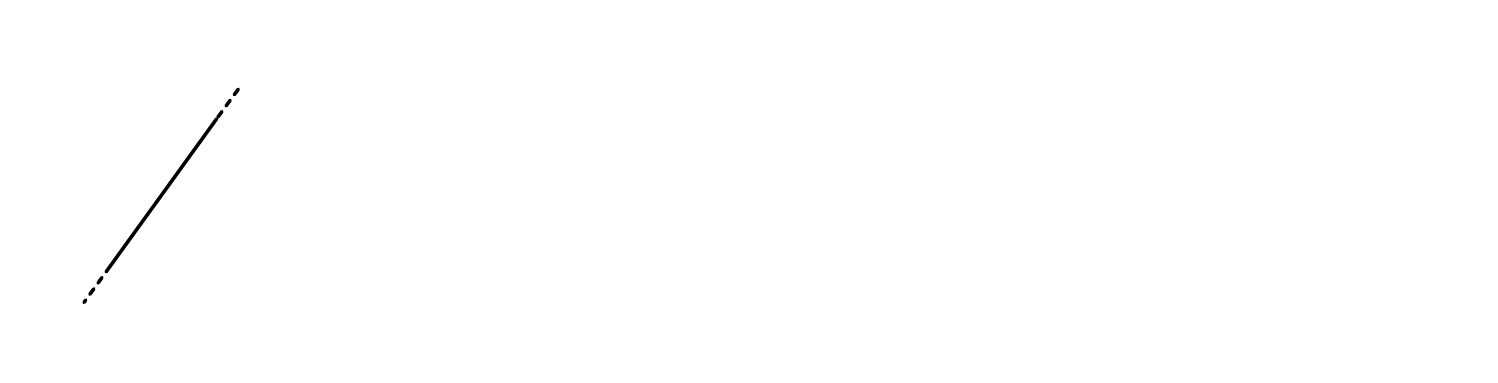}}%
    \put(0.09166478,0.07293964){\color[rgb]{0,0,0}\makebox(0,0)[lt]{\lineheight{1.25}\smash{\begin{tabular}[t]{l}$K_0$\end{tabular}}}}%
    \put(0,0){\includegraphics[width=\unitlength,page=2]{fig6.pdf}}%
    \put(0.47387999,0.01785239){\color[rgb]{0,0,0}\makebox(0,0)[t]{\lineheight{1.25}\smash{\begin{tabular}[t]{c}birth\end{tabular}}}}%
    \put(0,0){\includegraphics[width=\unitlength,page=3]{fig6.pdf}}%
    \put(0.83806305,0.01785239){\color[rgb]{0,0,0}\makebox(0,0)[t]{\lineheight{1.25}\smash{\begin{tabular}[t]{c}band\end{tabular}}}}%
    \put(0,0){\includegraphics[width=\unitlength,page=4]{fig6.pdf}}%
    \put(0.54012459,0.14269644){\color[rgb]{0,0,0}\makebox(0,0)[lt]{\lineheight{1.25}\smash{\begin{tabular}[t]{l}$U$\end{tabular}}}}%
    \put(0.87712466,0.17881656){\color[rgb]{0,0,0}\makebox(0,0)[lt]{\lineheight{1.25}\smash{\begin{tabular}[t]{l}$B$\end{tabular}}}}%
    \put(0,0){\includegraphics[width=\unitlength,page=5]{fig6.pdf}}%
  \end{picture}%
\endgroup%
}
	\caption{A movie for a ribbon concordance with a single birth and band.}\label{fig::6}
\end{figure}

In Figure~\ref{fig::7}, another movie of a concordance is shown, featuring a  4-dimensional 1-handle and 2-handle, but no births or saddles. Write $S^0$ for the attaching sphere of the 1-handle, and $k$ for the attaching sphere of the 2-handle. In Figure~\ref{fig::7}, we use the dotted unknot notation for 1-handle attachment. We note that the transition through frames 3, 4 and 5 in Figure~\ref{fig::6} is achieved by a sequence of isotopies (we remind the reader that handlesliding the knot $K_0$ over the knot $k$ or the dotted unknot corresponding to $S^0$ corresponds to an isotopy of the knot $K_0$ in the 3-manifold obtained by surgering $S^3$ on $S^0$ and $k$).

\begin{figure}[ht!]
	\centering
	\scalebox{.8}{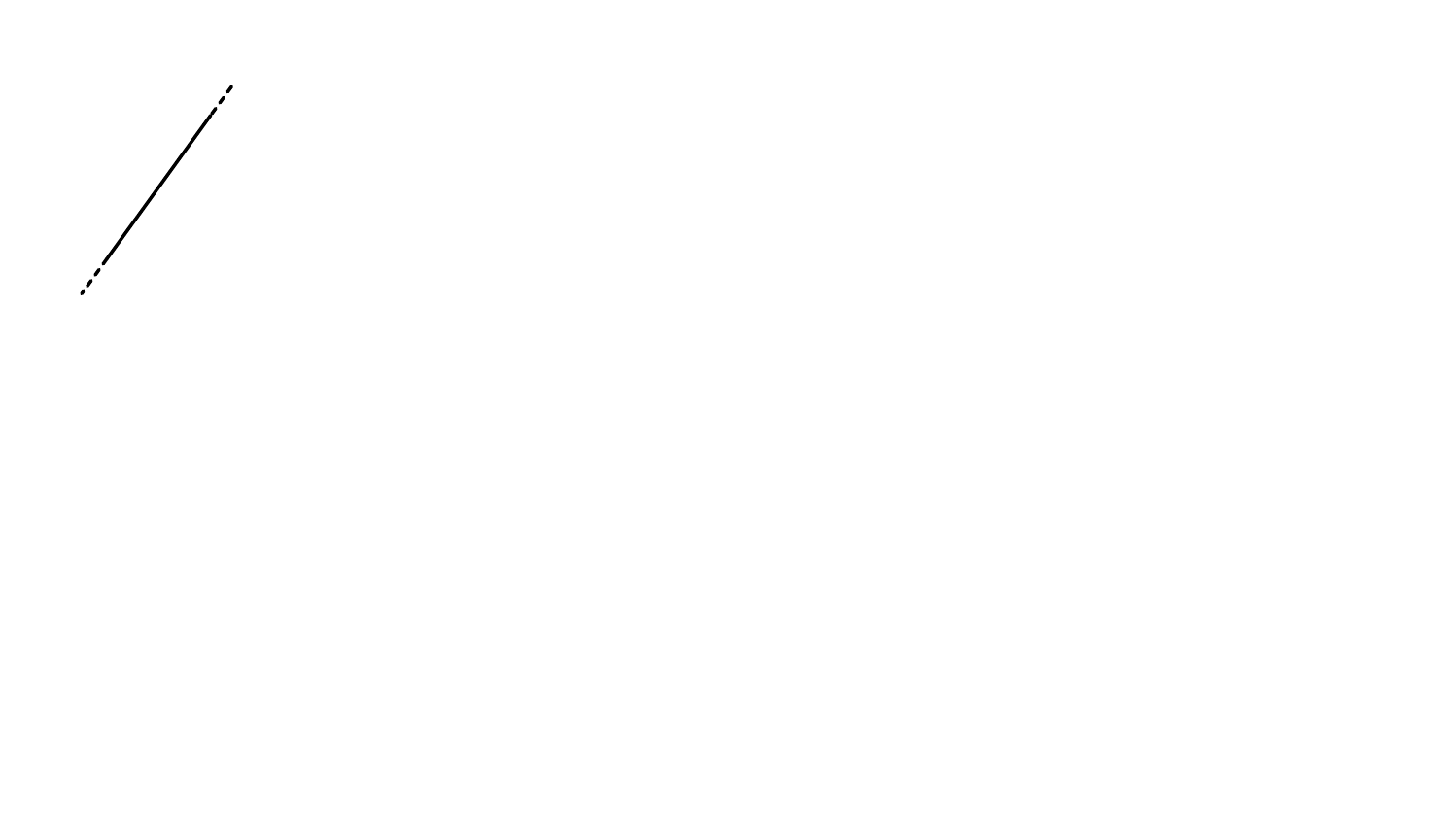}
	\caption{A movie for a ribbon concordance with a single 1-handle and 2-handle.}\label{fig::7}
\end{figure}

We claim that the movies shown in Figure~\ref{fig::6} and Figure~\ref{fig::7} can be related by a set of 4-dimensional movie moves. This is achieved by taking the movie in Figure~\ref{fig::6} and adjoining a canceling pair of 4-dimensional 1- and 2-handles. Write $k$ for the 2-handle, which we assume is a 0-framed unknot. After sliding $U$ over the dotted unknot forming the 1-handle, and sliding the band $B$ over $k$ (repeatedly), the band $B$ now cancels the birth which added $U$, and we are left with the movie in Figure~\ref{fig::7}.
\end{proof}

\section{A handle decomposition of the doubled concordance}

We now describe a handle decomposition of the complement of the doubled concordance $\bar{C}\circ C$, when $C$ is strongly homotopy-ribbon.

\begin{lem}\label{lem:handle-decomp} Suppose that $C$ is a strongly homotopy-ribbon concordance from $K_0$ to $K_1$, and $C$ has a handle decomposition consisting of $n$ 1-handles, attached along 0-spheres $S^0_1,\dots, S^0_n$, and $n$ 2-handles, attached along framed knots $k_1,\dots, k_n$. A handle decomposition for the complement of $\bar{C}\circ C\subset [0,1]\times S^3$ is as follows:
\begin{enumerate}
\item $n$ 1-handles, attached along  $S^0_1,\dots, S_n^0$.
\item $n$ 2-handles, attached along $k_1,\dots, k_n$.
\item $n$ 2-handles, attached along 0-framed meridians $k_1',\dots, k_n'$ of $k_1,\dots, k_n$.
\item $n$ 3-handles, attached along 2-spheres obtained by taking the belt spheres of the 1-handles, removing disks where they intersect $k_1,\dots, k_n$, and adding a collection of pairwise disjoint annuli which connect the punctures to the 0-framed meridians $k_1',\dots, k_n'$, and are contained in a neighborhood of the knots $k_1,\dots, k_n$. See Figure~\ref{fig::4} for an example.
\end{enumerate}
\end{lem}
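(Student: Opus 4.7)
The plan is to decompose the complement of the doubled concordance as $X_{\bar{C}\circ C} = X_C \cup_{\d^+ X_C} X_{\bar{C}}$, where $X_C = ([0,1]\times S^3) \setminus \nu(C)$ and the two pieces are glued along the common boundary $\d^+ X_C = S^3\setminus K_1 = \d^- X_{\bar C}$. Since $C$ is strongly homotopy-ribbon, the given handle decomposition of $X_C$ supplies items (1) and (2) of the lemma. It therefore remains to produce a complementary handle decomposition of $X_{\bar C}$ relative to its incoming boundary $S^3\setminus K_1$, which will contribute items (3) and (4).

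My first step would be to observe that $X_{\bar C}$ is diffeomorphic to $X_C$ with its time direction reversed. Hence it admits a \emph{dual} handle decomposition in which each $i$-handle of $X_C$ becomes a $(4-i)$-handle of $X_{\bar C}$. In particular, starting from $S^3\setminus K_1$ one attaches $n$ 2-handles (dual to the 2-handles along $k_i$), followed by $n$ 3-handles (dual to the 1-handles along $S^0_i$). The challenge is to identify the concrete attaching data of these dual handles as they sit inside $\d^+ X_C$.

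Next I would identify the dual 2-handles: by the standard local model for a 2-handle attachment, the belt circle of the 2-handle along $k_i$ is a 0-framed meridian $k_i'$ of $k_i$ in $\d^+ X_C$, so the dual 2-handle is attached along $k_i'$ with 0-framing, giving item (3). For the dual 3-handles, I would track the belt sphere $\Sigma_i$ of the 1-handle along $S^0_i$ through the sequence of 2-handle attachments. After a generic isotopy, each $k_j$ meets $\Sigma_i$ transversely in finitely many points; attaching the 2-handle along $k_j$ excises a tubular neighborhood of $k_j$ from the boundary 3-manifold and reglues a solid torus whose meridian is $k_j'$. So $\Sigma_i$ becomes a punctured sphere whose boundary circles are small meridians of $k_j$. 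These boundary circles can be tubed to the 0-framed meridians $k_j'$ by disjoint annuli lying in a neighborhood of $k_j$, producing a closed 2-sphere $\Sigma_i'$ in $\d^+ X_C$. This sphere is the attaching data of the 3-handle dual to the 1-handle along $S^0_i$, matching item (4).

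The main obstacle is the bookkeeping in the third step: making the abstract dual handle decomposition concrete by pushing the belt spheres $\Sigma_i$ into $\d^+ X_C$ through the sequence of 2-handle attachments, and verifying that the resulting attaching sphere is correctly described by the tubing operation with 0-framed meridians. I expect this to follow cleanly from the local picture at each $k_j$, where the new boundary torus around $k_j$ carries $k_j'$ as the natural meridian, so the annuli in a neighborhood of $k_j$ are the only sensible way to close up the punctured belt sphere. Once this is established, the handle decomposition of $X_{\bar C \circ C}$ is simply the concatenation of (1)-(2) from $X_C$ with (3)-(4) from $X_{\bar C}$.
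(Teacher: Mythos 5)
Your proposal is correct and takes essentially the same route as the paper's proof: the paper likewise realizes $X_{\bar{C}\circ C}$ as the double of $X_C$, obtaining items (3) and (4) from the dual handles, and describes the $3$-handle attaching sphere by tracking the belt sphere of the $1$-handle through the surgeries on $k$ and on its dual circle $k'$. The paper presents this more tersely (restricting to the case $n=1$ and citing Gompf--Stipsicz, Example 4.6.3), but the geometric content---belt circles of $2$-handles becoming $0$-framed meridians, annuli of the form $a\times S^1$ in the reglued solid torus $N'=D^2\times S^1$---matches what you describe.
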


\begin{figure}[ht!]
	\centering
\begingroup%
  \makeatletter%
  \providecommand\color[2][]{%
    \errmessage{(Inkscape) Color is used for the text in Inkscape, but the package 'color.sty' is not loaded}%
    \renewcommand\color[2][]{}%
  }%
  \providecommand\transparent[1]{%
    \errmessage{(Inkscape) Transparency is used (non-zero) for the text in Inkscape, but the package 'transparent.sty' is not loaded}%
    \renewcommand\transparent[1]{}%
  }%
  \providecommand\rotatebox[2]{#2}%
  \newcommand*\fsize{\dimexpr\f@size pt\relax}%
  \newcommand*\lineheight[1]{\fontsize{\fsize}{#1\fsize}\selectfont}%
  \ifx\svgwidth\undefined%
    \setlength{\unitlength}{256.45900902bp}%
    \ifx\svgscale\undefined%
      \relax%
    \else%
      \setlength{\unitlength}{\unitlength * \real{\svgscale}}%
    \fi%
  \else%
    \setlength{\unitlength}{\svgwidth}%
  \fi%
  \global\let\svgwidth\undefined%
  \global\let\svgscale\undefined%
  \makeatother%
  \begin{picture}(1,0.32187908)%
    \lineheight{1}%
    \setlength\tabcolsep{0pt}%
    \put(0,0){\includegraphics[width=\unitlength,page=1]{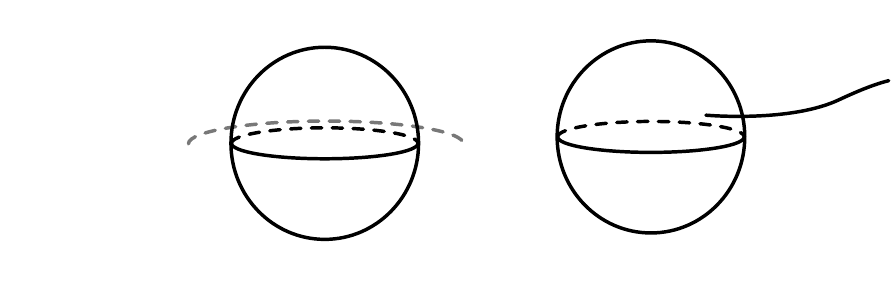}}%
    \put(0.12510824,0.21996487){\color[rgb]{0,0,0}\makebox(0,0)[rt]{\lineheight{1.25}\smash{\begin{tabular}[t]{r}$k_i'$\end{tabular}}}}%
    \put(0.91596937,0.21651512){\color[rgb]{0,0,0}\makebox(0,0)[rt]{\lineheight{1.25}\smash{\begin{tabular}[t]{r}$k_i$\end{tabular}}}}%
    \put(0,0){\includegraphics[width=\unitlength,page=2]{fig5.pdf}}%
    \put(0.48440393,0.28559245){\color[rgb]{0.31372549,0.31372549,0.31372549}\makebox(0,0)[lt]{\lineheight{1.25}\smash{\begin{tabular}[t]{l}$S_j^2$\end{tabular}}}}%
    \put(0,0){\includegraphics[width=\unitlength,page=3]{fig5.pdf}}%
    \put(0.04829793,0.19518339){\color[rgb]{0,0,0}\makebox(0,0)[rt]{\lineheight{1.25}\smash{\begin{tabular}[t]{r}$k_i$\end{tabular}}}}%
  \end{picture}%
\endgroup%

	\caption{Examples of the 2-spheres $S^2_j$ (shaded gray), along which the 3-handles are attached. In general, a single knot $k_i$ may intersect $S_j'$ many times, and also, many different $k_i$ may intersect a single 2-sphere $S_j^2$.}\label{fig::4}
\end{figure}

\begin{proof} The proof is  standard, though it is usually presented in the context of closed $4$-manifolds, where the $3$-handles need not be described explicitly. See~\cite{GompfStipsicz}*{Example~4.6.3}. For notational simplicity, we focus on the case when the complement of $C$ has a handle decomposition with one 1-handle and one 2-handle. The general case is a straightforward modification.

Let $S^0$ denote the attaching 0-sphere of the 1-handle. Write $S^3(S^0)$ for the 3-manifold obtained from $S^3$ by surgery along $S^0$.  (In general, we write $Y(j)$ for the manifold obtained by surgery on a framed sphere $j$ in a 3-manifold $Y$). Note that $S^3(S^0)\cong S^1\times S^2$. Let $k\subset S^3(S^0)$ be the knot along which the 2-handle is attached. The framing of $k$ induces an identification of a neighborhood of $k$ with $S^1\times D^2$. Write $N$ for this neighborhood. The surgered manifold $S^3(S^0)(k)$ is obtained from $S^3(S^0)$ by removing $N=S^1\times D^2$ and gluing in $N'=D^2\times S^1$.  The belt sphere of the 2-handle is the knot 
\[
k'=\{0\}\times S^1,
\]
which is framed using the same identification of $N'$ as $D^2\times S^1$.

The surgered manifold $S^3(S^0)(k)(k')$ is canonically diffeomorphic to $S^3(S^0)$. The attaching sphere of the 3-handle in $S^3(S^0)(k)(k')$ is easy to see with this description: it coincides with the belt sphere of 1-handle, under the identification of $S^3(S^0)(k)(k')$ with $S^3(S^0)$.

We consider now the image of the attaching 2-sphere of the 3-handle in $S^3(S^0)(k)$. Outside of $N$ (the neighborhood of $k$), the attaching sphere of the 3-handle coincides with the belt sphere of the 1-handle attached along $S^0$. Inside of $N'$, it is equal to a collection of annuli of the form $a\times S^1$, where $a\subset D^2$ is a radial arc extending from $\{0\}$ to $\d D^2$. There is one annulus for each intersection point of $k$ with the belt sphere of the 1-handle.

Next, we push $k'$ out of the neighborhood $N'$, so that we obtain a more standard Kirby calculus picture. When we do this, it is straightforward to check that $k'$ becomes a 0-framed meridian, and that the annuli are as described.
\end{proof}

\section{Proof of the main theorem}

Our proof follows from a ``sphere tubing'' property of the link Floer TQFT from \cite{ZemCFLTQFT}, which we state below in Lemma~\ref{lem:sphere-slide}. We first review some background about the link Floer TQFT.

The link Floer TQFT uses the following decorated link cobordism category, originally described by Juh\'{a}sz \cite{JCob}:

\begin{define}
\begin{enumerate}
\item A \emph{multi-based link} $\bL=(L,\ws,\zs)$ in a 3-manifold $Y$ is an orientated link $L$, containing two finite collections of basepoints, $\ws$ and $\zs$, such that each component of $L$ contains at least one $\ws$ basepoint and one $\zs$ basepoint. Furthermore, as one traverses $L$, the basepoints alternate between $\ws$ and $\zs$.
\item A \emph{decorated link cobordism} $(W,\cF)$ from $(Y_1,\bL_1)$ to $(Y_2,\bL_2)$ consists of a cobordism $W$ from $Y_1$ to $Y_2$, as well as a decorated surface $\cF=(\Sigma,\cA)$, as follows. The surface $\Sigma$ is a properly and smoothly embedded surface in $W$, such that $\d \Sigma=-L_1\cup L_2$. Furthermore, $\cA\subset\Sigma$ is a properly embedded 1-manifold  which divides $\Sigma$ into two subsurfaces, $\Sigma_{\ws}$ and $\Sigma_{\zs}$, which satisfy $\ws_1,\ws_2\subset \Sigma_{\ws}$ and $\zs_1,\zs_2\subset \Sigma_{\zs}$. 
\end{enumerate}
\end{define}

The link Floer TQFT from \cite{ZemCFLTQFT} assigns cobordism maps to a slightly different version of knot Floer homology than we described in the introduction. If $(Y,\bK)$ is a multi-pointed knot and $\frs\in \Spin^c(Y)$, we consider a version of knot Floer homology which we denote by $\cCFK^-(Y,\bK,\frs)$. Explicitly, if $(\Sigma,\as,\bs,\ws,\zs)$ is a Heegaard diagram, then $\cCFK^-(Y,\bK,\frs)$ is freely generated over the 2-variable polynomial ring $\bF_2[u,v]$ by intersection points $\xs\in \bT_{\a}\cap \bT_{\b}$ with $\frs_{\ws}(\xs)=\frs$. The differential counts Maslov index 1 holomorphic disks in the symmetric product $\Sym^n(\Sigma)$ (where $n=g(\Sigma)+|\ws|-1$) via the formula
\[
\d \xs=\sum_{\ys\in \bT_{\a}\cap \bT_{\b}} \sum_{\substack{\phi\in \pi_2(\xs,\ys)\\ \mu(\phi)=1}} \# \left(\cM(\phi)/\R\right ) \cdot u^{n_{\ws}(\phi)} v^{n_{\zs}(\phi)}\cdot \ys.
\]
 In the above expression, $n_{\ws}(\phi)$ denotes the total multiplicity of a class $\phi$ over the $\ws$ basepoints, and $n_{\zs}(\phi)$ is defined similarly.

 The chain complex $\cCFK^-(Y,\bK,\frs)$ has a $\Z\oplus \Z$ filtration given by powers of $u$ and $v$. If $c_1(\frs)$ is torsion and $K$ is null-homologous, there are also three gradings: two Maslov gradings, $\gr_{\ws}$ and $\gr_{\zs}$, as well as an Alexander grading $A$. The gradings are related by the equation $A=\tfrac{1}{2}(\gr_{\ws}-\gr_{\zs})$.  The actions of $u$ and $v$ have $(\gr_{\ws},\gr_{\zs})$-bigradings of $(-2,0)$ and $(0,-2)$, respectively.

For a knot $K$ in $S^3$, the previously described knot Floer complex $\CFK^\infty(K)$ is obtained by decorating $K$ with two basepoints to form $\bK$, inverting $u$ and $v$ in $\cCFK^-(S^3,\bK)$, and taking the subcomplex  in Alexander grading zero. The action of $U$ on $\CFK^\infty$ corresponds to the action of the product $uv$ on $\cCFK^-$. The differential drops both Maslov gradings by 1, and preserves the Alexander grading.

If $(W,\cF)$ is a decorated link cobordism from $(Y_1,\bK_1)$ to $(Y_2,\bK_2)$, and $\frs\in \Spin^c(W)$, then the construction from \cite{ZemCFLTQFT} gives a map
\[
F_{W,\cF,\frs}\colon \cCFK^-(Y_1,\bK_1,\frs|_{Y_1})\to \cCFK^-(Y_2,\bK_2,\frs|_{Y_2}).
\]
When $W$ and $\frs$ are understood from context, we will simply write $F_{\cF}$ for the cobordism map.
 
 If $(W,\cF)$ is a concordance in $[0,1]\times S^3$, decorated with a pair of arcs, the link cobordism maps preserve the Maslov and Alexander gradings by \cite{ZemAbsoluteGradings}*{Theorem~1.4}. Correspondingly, the cobordism maps restrict to give maps on $\CFK^\infty$ (the subcomplex in Alexander grading 0), as described in the introduction.

Before we prove Theorem~\ref{thm:1}, we recall the key lemma proven by the second author~\cite{ZemRibbon}*{Lemma~3.1}:

\begin{lem}
\label{lem:sphere-slide} Suppose that $\cC=(C,\cA)$ is a decorated concordance in $[0,1]\times S^3$, and $S\subset [0,1]\times S^3$ is a smoothly embedded 2-sphere in the complement of $C$. Let $\cC'=(C',\cA')$ be a decorated concordance obtained by connecting $C$ and $S$ with a tube, away from the decorations. Then
\[
F_{\cC}\simeq F_{\cC'},
\]
as maps from $\CFK^\infty(K_0)$ to $\CFK^\infty(K_1)$.
\end{lem}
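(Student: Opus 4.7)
The strategy is to localize the calculation to a neighborhood of the tube using the TQFT composition law, and then reduce to a chain-level identity coming from a local $1$-handle/$3$-handle cancellation.

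First, I would isotope the tube $T$ connecting $C$ and $S$ to lie in a small $4$-dimensional neighborhood $N$ disjoint from $\cA$, meeting $C$ in a small disk $D_C$ and meeting $S$ in a small disk $D_S$. Let $\nu(S)$ be a tubular neighborhood of $S$ disjoint from $C$ but meeting $T$ at $D_S$. The decorated cobordisms $\cC$ and $\cC'$ agree outside $N \cup \nu(S)$. By the composition law for the link Floer cobordism maps along the separating $3$-manifold $\d(N \cup \nu(S))$, it suffices to show that the local cobordism maps associated to the inside piece for $\cC$ and for $\cC'$ agree up to chain homotopy.

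Second, inside $N \cup \nu(S)$, the decorated surface for $\cC$ is $D_C \sqcup S$ (a disk and a disjoint closed sphere), while for $\cC'$ it is the connected sum $D_C \#_T S$. The tube $T$ is a $4$-dimensional $1$-handle on the surface attached along one point in $D_C$ and one point in $S$. Thus the local map for $\cC'$ factors as the local map for $\cC$, postcomposed with a surface $1$-handle map. It suffices to show that this surface $1$-handle, applied to the TQFT factor coming from the closed sphere $S$, is chain homotopic to the identity on the $D_C$ factor.

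Third, because $\cA \cap S = \emptyset$, the sphere $S$ is entirely contained in one of $\Sigma_{\ws}$ or $\Sigma_{\zs}$, so its closed-surface contribution to the link Floer TQFT is a specific $\bF_2[u,v]$-scalar determined by the formulas in \cite{ZemCFLTQFT}. The surface $1$-handle that tubes $S$ into $D_C$ then produces a $1$-handle/$3$-handle cancellation at the TQFT level: the $1$-handle ``swallows'' the $S^2 \times D^2$ neighborhood of the sphere, which is a $3$-handle attachment at the level of $4$-manifold handle decompositions, and the resulting composition is chain homotopic to the identity on $D_C$. The main obstacle I anticipate is the explicit chain-level verification of this tube-plus-sphere evaluation. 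Although the abstract picture is a trivial cancellation, the link Floer TQFT carries extra data via the filtration by $u$ and $v$ and the multiple basepoints $\ws, \zs$, and one must verify that the cancellation respects all this structure; tracking the $\Spin^c$ restrictions and Maslov gradings carefully, using that $[S]$ has self-intersection zero in the ambient $4$-manifold and that the $1$-handle map has an explicit graph-TQFT description, should yield the identification.
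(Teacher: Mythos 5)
Your proposal starts with the same localization idea as the paper (cut along the boundary of a neighborhood of $S$ and use the composition law), but it then diverges, and the divergence contains a real error.

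The central problem is in your second and third steps. You write that ``inside $N \cup \nu(S)$, the decorated surface for $\cC$ is $D_C \sqcup S$.'' This is not the situation: by hypothesis, $S$ lies in the \emph{complement} of $C$, so $S$ is a $2$-sphere in the ambient $4$-manifold $[0,1]\times S^3$ that is not a component of the decorated surface $C$. The local piece of the surface for $\cC$ is just the disk $D_C$; the sphere $S$ enters the picture only as part of the $4$-manifold, not as part of $(\Sigma,\cA)$. Consequently there is no ``closed-surface contribution from $S$'' for $\cC$, and the claimed factorization of the local map for $\cC'$ as (the local map for $\cC$) $\circ$ (surface $1$-handle map) does not make sense as stated: tubing $D_C$ to $S$ is not a $1$-handle stabilization of $D_C$ because $S$ is not already present in the surface for $\cC$. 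One could try to repair this by inserting an intermediate step showing that adjoining a disjoint, suitably decorated closed sphere to the surface does not change the cobordism map, and then comparing that against the tubed surface; but you do not state or prove such a step, it is not obviously true (the closed-surface evaluation formulas in \cite{ZemCFLTQFT} are nontrivial and depend delicately on the decoration), and the ``$1$-handle/$3$-handle cancellation is chain homotopic to the identity'' claim — which you yourself flag as the main obstacle — is precisely the content that would need a real argument.

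For contrast, the paper's proof avoids any tube-plus-sphere evaluation. After localizing to $N(S)$ exactly as you do, it observes that both local maps are maps $\cCFK^-(\emptyset) \to \cCFK^-(S^1\times S^2,\bO,\frs_0)$, computes the target explicitly (Equation~\eqref{eq:s1xs2-unknot}), and uses the grading-change formula of \cite{ZemAbsoluteGradings}*{Theorem~1.4} to show both local maps land in a bigraded summand of rank $1$ over $\bF_2$. Hence the two maps agree once they are both shown to be nonzero, and nonvanishing follows by capping off to a $2$-knot in $S^4$ and invoking \cite{ZemRibbon}*{Lemma~4.1}. This indirect grading-plus-nonvanishing argument is what lets the proof sidestep the computation your approach would require.
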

\begin{proof} Since $\CFK^\infty$ is obtained by inverting $u$ and $v$ in $\cCFK^-$, and then taking the subcomplex in Alexander grading zero, it suffices to prove the analogous claim on $\cCFK^-$.

 We factor the cobordism map $F_{\cC}$ through a closed neighborhood $N(S)$ of the sphere $S$. Here we use the fact that the tube meets $\partial N(S)\cong S^1\times S^2$ in an unknot. Write $\cD$ and $\cD'$ for intersections of $(C,\cA)$ and $(C',\cA')$ with $N(S)$. After an isotopy, we may assume that both $\cD$ and $\cD'$ are disks which intersect the dividing set in a single arc. We obtain cobordism maps 
\[
F_{N(S),\cD,\frt_0},\,F_{N(S),\cD',\frt_0}:\cCFK^-(\emptyset)\to\cCFK^-(S^1\times S^2,\bO,\mathfrak{s}_0),\]
 where $\bO$ is a doubly pointed unknot, $\frt_0$ is the $\Spin^c$ structure on $N(S)$ which evaluates trivially on $S$, and $\mathfrak{s}_0$ is its restriction to $\d N(S)$. (Note that by definition, $\cCFK^-(\emptyset):=\bF_2[u,v]$, with vanishing differential).  Since $\cC$ and $\cC'$ coincide outside of $N(S)$, using the composition law, it suffices to show that $F_{N(S),\cD,\frt_0}\simeq F_{N(S),\cD',\frt_0}$.

There is a chain isomorphism
\begin{equation}
\cCFK^-(S^1\times S^2,\bO,\frs_0)\iso \left((\bF_2)_{(-\frac{1}{2}, -\frac{1}{2})}\oplus (\bF_2)_{(\frac{1}{2}, \frac{1}{2})}\right)\otimes_{\bF_2} \bF_2[u,v],\label{eq:s1xs2-unknot}
\end{equation}
where we view the latter complex as having vanishing differential, and $(\bF_2)_{(p,q)}$ denotes the rank 1 vector space over $\bF_2$, concentrated in $(\gr_{\ws},\gr_{\zs})$-bigrading $(p,q)$.

The grading change formula from \cite{ZemAbsoluteGradings}*{Theorem~1.4} implies that $F_{N(S),\cD,\frt_0}$ and $F_{N(S),\cD',\frt_0}$ both have $(\gr_{\ws},\gr_{\zs})$-bigrading $\left(-\tfrac{1}{2},-\tfrac{1}{2}\right)$. Noting that $u$ and $v$ have $(\gr_{\ws}, \gr_{\zs})$ bigrading $(-2,0)$ and $(0,-2)$, from Equation~\eqref{eq:s1xs2-unknot} we find that $\cCFK^-(S^1\times S^2,\bO,\frs_0)$ has rank 1 in $(\gr_{\ws},\gr_{\zs})$-bigrading $\left(-\tfrac{1}{2},-\tfrac{1}{2}\right)$. Hence, if $F_{N(S),\cD,\frt_0}$ and $F_{N(S),\cD',\frt_0}$ are both non-zero, then they must be equal. By capping off with a 3-handle and a 4-handle, we obtain a 2-knot in $S^4$ decorated with a single dividing curve. The corresponding cobordism map is non-zero by \cite{ZemRibbon}*{Lemma~4.1}, so we conclude $F_{N(S),\cD,\frt_0}$ and $F_{N(S),\cD',\frt_0}$ are both non-zero, and hence equal.
\end{proof}

\begin{proof}[Proof of Theorem~\ref{thm:1}] 

See Figure~\ref{fig:example} for a schematic example of this proof.

Let $C$ be a strongly homotopy-ribbon concordance from $K_0$ to $K_1$, and let $\bar{C}$ be the concordance from $K_1$ to $K_0$ obtained by turning around and reversing the orientation of $C$. Lemma~\ref{lem:handle-decomp} gives a handle decomposition of the complement of $\bar{C}\circ C$. Let $\cC$ be $C$, decorated with a pair of dividing arcs, and let $\bar{\cC}$ denote $\bar{C}$, decorated with the mirrored decoration.

By the composition law for link cobordisms,
\[
F_{\bar{\cC}}\circ F_{\cC}\simeq F_{\bar{\cC}\circ \cC},
\]
so it suffices to show that $F_{\bar{\cC}\circ \cC}\simeq \id_{\CFK^\infty(K_0)}.$

We use the handle decomposition described in Lemma~\ref{lem:handle-decomp}. Note that if we ignore the knot $K_0$, then the handle decomposition describes $[0,1]\times S^3$. The knot $K_0$ may of course be tangled with the 2-handle attaching spheres $k_1,\dots, k_n$.

The idea is as follows. We  ``pull'' the knot $K_0$ away from the attaching spheres for the 4-dimensional handles. Of course to achieve this, we must pull $K_0$ through the framed knots $k_1,\dots, k_n$ (i.e. we must perform a sequence of crossing changes of $K_0$ with the knots $k_1,\dots, k_n$). To achieve a crossing change of $K_0$ with a framed knot $k_i$, we can isotope a segment of $K_0$ parallel to $k_i$ until we reach the 0-framed meridian $k_i'$, and then ``handleslide'' $K_0$ across $k_i$. Given the explicit description of the attaching spheres of the 3-handles stated in Lemma~\ref{lem:handle-decomp}, this isotopy and handleslide can be performed in the complement of the attaching 2-spheres of the 3-handles.

Note that handlesliding $K_0$ across $k_i'$ is not actually a handleslide of the described $4$-manifold, as $K_0$ is not a handle attaching circle. Rather, this move changes the concordance $\bar{C}\circ C$. Indeed the two concordances are related by tubing to $\bar{C}\circ C$ the 2-sphere $S_i$, which is isotopic to the union of a small Seifert disk of unknot $k_i'$ in $S^3$ together with a core of the 2-handle attached along $k_i'$. (That is, $S_i$ consists of the cocore of the $2$-handle attached along $k_i$ and the core of the $2$-handle attaching along $k_i'$, glued along their common boundary.) This move does not change the cobordism map by Lemma~\ref{lem:sphere-slide}.

After untangling $K_0$ from all of the attaching spheres of the 4-dimensional handles, we are left with the identity knot cobordism $([0,1]\times S^3, [0,1]\times K_0)$, completing the proof.
\end{proof}

\begin{rem}\label{rem:end}The above proof works for strongly homotopy-ribbon concordances in an $h$-cobordism $W$ from $S^3$ to itself. Upon disentangling $K_0$ from the attaching spheres for the complement of $\bar{C}\circ C$, we are left with the complement of the concordance $[0,1]\times K_0$ inside of $([0,1]\times S^3)\# \hat{W}$, where $\hat{W}$ is the homotopy 4-sphere obtained by filling in $\d W$ with two 4-balls. By factoring the cobordism map through the connected sum sphere, the induced map is easily seen to be the identity.
\end{rem}

\begin{figure}
\centering
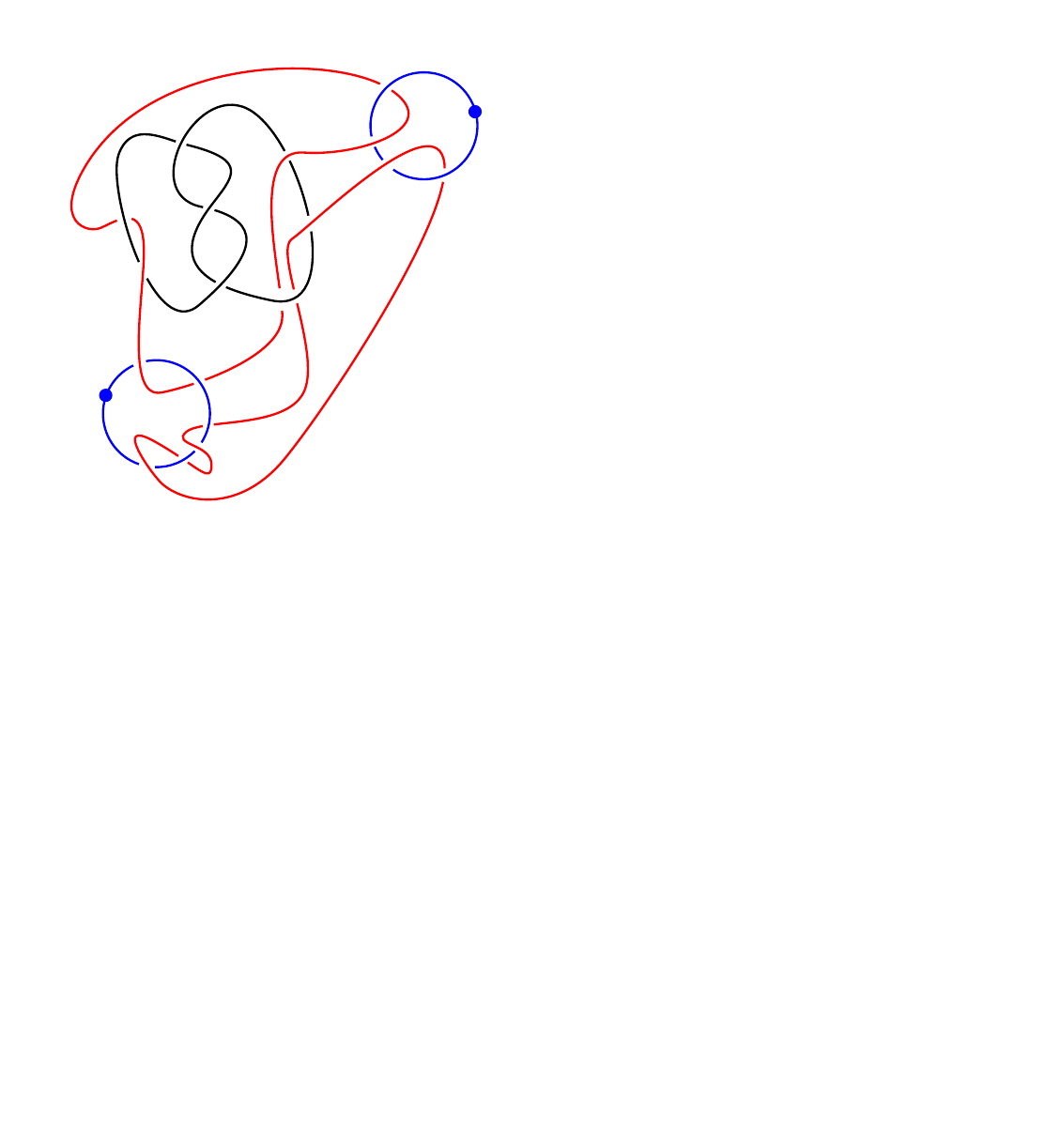
\caption{The proof of Theorem~\ref{thm:1}. Top left: the concordance $C$. Top right: the concordance $\bar{C}\circ C$, as in Lemma~\ref{lem:handle-decomp}. We do not picture the $3$-handle attaching spheres, which lie close to the (long, red) $2$-handle attaching circles. Bottom left: we tube $2$-spheres to $\bar{C}\circ C$ to effect crossing changes in this description. Bottom right: we obtain the trivial concordance $[0,1]\times K_0$.}\label{fig:example}
\end{figure}

\section*{Acknowledgments}

We would like to thank Andr\'{a}s Juh\'{a}sz for helpful conversations, in particular for pointing out that the tubing operation from \cite{ZemRibbon} can be interpreted in terms of sliding a surface over a 2-sphere. We would also like to thank the anonymous referee for some helpful comments.

\bibliographystyle{mrl}
\bibliography{biblio}

\end{document}